\newtheorem{theorem}{Theorem}[section]
\newtheorem{lemma}[theorem]{Lemma}
\newtheorem{prop}[theorem]{Proposition}
\newtheorem{cor}[theorem]{Corollary}
\theoremstyle{definition}
\theoremstyle{remark}
\newtheorem{remark}[theorem]{\bf{Remark}}
\numberwithin{equation}{section}
\begin{document}

\title [ Numerical radius inequalities of Hilbert space operators] {  \Small{ Furtherance of Numerical radius inequalities of Hilbert space operators }  }

\author[P. Bhunia and K. Paul]{Pintu Bhunia and Kallol Paul}

\address{(Bhunia) Department of Mathematics, Jadavpur University, Kolkata 700032, West Bengal, India}
\email{pintubhunia5206@gmail.com}

\address{(Paul) Department of Mathematics, Jadavpur University, Kolkata 700032, West Bengal, India}
\email{kalloldada@gmail.com;kallol.paul@jadavpuruniversity.in}

\thanks{First author would like to thank UGC, Govt. of India for the financial support in the form of SRF}
\thanks{}
\thanks{}


\subjclass[2010]{47A12, 47A30}
\keywords{ Numerical radius, Spectral radius, Operator norm, Bounded linear operator, Inequality}

\maketitle

\begin{abstract}
If $A,B$ are bounded linear operators on a complex Hilbert space, then 
\begin{eqnarray*}
w(A) &\leq& \frac{1}{2}\left( \|A\|+\sqrt{r\left(|A||A^*|\right)}\right),\\
w(AB \pm BA)&\leq& 2\sqrt{2}\|B\|\sqrt{ w^2(A)-\frac{c^2(\Re (A))+c^2(\Im (A))}{2} },
\end{eqnarray*}
where $w(.),\|.\|,c(.)$ and $r(.)$ are the numerical radius, the operator norm, the Crawford number and the spectral radius respectively, and $\Re (A)$, $\Im (A)$ are the real part, the imaginary part of $A$ respectively. The inequalities obtained here generalize and improve on the existing well known inequalities.
\end{abstract}

\section{\textbf{Introduction}}

\noindent Let  $ \mathcal{H}$ be a complex Hilbert space with inner product  $\langle.,.\rangle$ and  let $ \mathcal{B}(\mathcal{H}) $ be the collection of all bounded linear operators on $ \mathcal{H}.$  As usual the norm induced by the inner product $\langle.,.\rangle$  is denoted by $ \Vert . \Vert.$  For  $A \in \mathcal{B}(\mathcal{H})$, let $\|A\|$ be the operator norm of $A,$ i.e., $ \Vert A \Vert = \sup_{\Vert x \Vert =1} \Vert Ax \Vert.$  For  $A \in \mathcal{B}(\mathcal{H})$, $A^*$ denotes the adjoint of $A$ and $|A|, |A^*|$  respectively denote the positive part of $A,A^*$, i.e., $|A|= (A^*A)^{\frac{1}{2}} , |A^*|= (AA^*)^{\frac{1}{2}}.$  Let $S_{\mathcal{H}}$ denote the unit sphere of the Hilbert space $\mathcal{H}.$  The numerical range of $A$, denoted by $W(A),$ is defined as  $W(A):=\big\{ \langle Ax,x\rangle~~:~~x\in S_{\mathcal{H}}\big\}.$ Considering the continuous mapping $ x \longmapsto \langle Ax,x \rangle $  from $S_{\mathcal{H}} $ to the scalar field $\mathbb{C},$  it is easy to see that $W(A)$ is a  compact subset of $\mathbb{C}$  if $\mathcal{H}$ is finite dimensional. The famous Toeplitz-Hausdorff theorem states that the numerical range is a convex set.
\noindent The numerical radius and the Crawford number of $A$, denoted as $w(A)$ and $c(A)$, respectively, are defined as 
\begin{eqnarray*}
w(A):= \sup_{x\in S_{\mathcal{H}}} |\langle Ax,x\rangle|
\end{eqnarray*}
and 
\begin{eqnarray*}
c(A):= \inf_{x\in S_{\mathcal{H}}} |\langle Ax,x\rangle|.
\end{eqnarray*}
The numerical radius is a norm on $\mathcal{B}(\mathcal{H}) $ satisfying the following inequality 
\begin{eqnarray}\label{eqv}
\frac{1}{2}\|A\|\leq w(A)\leq \|A\|.
\end{eqnarray}
Clearly, (\ref{eqv}) implies that the numerical radius norm is equivalent to the operator norm. The inequality  (\ref{eqv}) is sharp,  $w(A) = \|A\|$ if $AA^*=A^*A$ and $ w(A)=\frac{\|A\|}{2} $ if $A^2=0.$ For further readings on the  numerical range and the numerical radius of bounded linear operators, we refer to the  book  \cite{GR}.
The spectral radius of $A$, denoted as $r(A),$ is defined as 
\begin{eqnarray*}
r(A):= \sup_{\lambda \in \sigma(A)} |\lambda|,
\end{eqnarray*}
where $\sigma(A)$ is the spectrum of $A$. Since $\sigma(A)\subseteq \overline{W(A)}$, $r(A)\leq w(A)$. Also, $r(A)=w(A)$ if $A^*A=AA^*.$
Kittaneh \cite[Th. 1]{k05} and \cite[Th. 1]{k03} improved on the inequality (\ref{eqv}), to prove that 
\begin{eqnarray}\label{imp1}
\frac{1}{4}\left \|A^*A+AA^*\right \| &\leq&  w^2(A) \leq \frac{1}{2} \left \|A^*A+AA^*\right \|
\end{eqnarray}
and
\begin{eqnarray}\label{imp2}
w(A) &\leq& \frac{1}{2}\left(\|A\|+\sqrt{\|A^2\|}\right),
\end{eqnarray}
respectively.
Bhunia and Paul \cite[Cor. 2.5]{BP2} improved on the right hand inequalities of both  (\ref{eqv}) and (\ref{imp1}) to prove that 
\begin{eqnarray}\label{bound1comb}
	w^2(A)& \leq &  \min_{0\leq \alpha \leq 1} \left \| \alpha |A|^2 +(1-\alpha)|A^*|^2 \right \|.
\end{eqnarray} 
In \cite[Th. 2.1]{BP1}, Bhunia and Paul also improved on the left hand inequalities of both  (\ref{eqv}) and (\ref{imp1}) to prove that 
\begin{eqnarray*}
 \frac{1}{4}\|A^*A+AA^*\| &\leq&   \frac{1}{8}\big( \|A+A^*\|^2+\|A-A^*\|^2\big) \\
&  \leq & \frac{1}{8}\big( \|A+A^*\|^2+\|A-A^*\|^2\big) +\frac{1}{8}c^2\big(A+A^*\big)+\frac{1}{8}c^2\big(A-A^*\big)\\
&\leq&  w^2(A).
\end{eqnarray*}
Fong and Holbrook \cite{FH} obtained the remarkable numerical radius inequality that 
\begin{eqnarray}\label{Fong}
w(AB+ BA) \leq  2\sqrt{2} \|B\| w(A).
\end{eqnarray}
Hirzallah and Kittaneh \cite{HK} improved on the inequality (\ref{Fong}) in the following form:
\begin{eqnarray}
w(AB \pm BA)&\leq& 2\sqrt{2}\|B\|\sqrt{ w^2(A)-\frac{|~~\|\Re (A)\|^2-\|\Im (A)\|^2~~|}{2} }.
\end{eqnarray}

\noindent Over the years many mathematicians have developed various  inequalities  improving (\ref{eqv}), we refer to \cite{OK, BBP1,BBP2,BPN,BBP3,BBP4,BBP5} and references therein.\\
In this paper, we obtain an improvement and generalization of the inequality (\ref{imp2}). Some inequalities for the numerical radius of the commutators of bounded linear operators are also obtained, which improve on (\ref{Fong}).  
 
\section{\textbf{Improvement of inequality (\ref{imp2})}}

\noindent Our improvement of the inequality (\ref{imp2}), is stated as the following theorem:

\begin{theorem}\label{th-oprt1}
Let $A\in \mathcal{B}(\mathcal{H}).$ Then, $w(A) \leq \frac{1}{2}\left( \|A\|+\sqrt{r\left(|A||A^*|\right)}\right).$
\end{theorem}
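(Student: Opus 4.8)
The plan is to recast the unfamiliar quantity $\sqrt{r(|A||A^*|)}$ as an operator norm and then attack a pointwise estimate. First I would record the identity $\sqrt{r(|A||A^*|)}=\big\||A|^{1/2}|A^*|^{1/2}\big\|$. Indeed, since $r(XY)=r(YX)$ for any bounded operators, $r(|A||A^*|)=r\big(|A|^{1/2}|A^*|\,|A|^{1/2}\big)$, and the operator $|A|^{1/2}|A^*|\,|A|^{1/2}=\big(|A^*|^{1/2}|A|^{1/2}\big)^*\big(|A^*|^{1/2}|A|^{1/2}\big)$ is positive, so its spectral radius equals its norm; this gives $r(|A||A^*|)=\big\||A^*|^{1/2}|A|^{1/2}\big\|^{2}=\big\||A|^{1/2}|A^*|^{1/2}\big\|^{2}$. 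Hence the theorem is equivalent to $w(A)\le \tfrac12\big(\|A\|+\big\||A|^{1/2}|A^*|^{1/2}\big\|\big)$, a form that no longer mentions the spectral radius.

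Next, for a unit vector $x$ I would invoke the mixed Schwarz inequality $|\langle Ax,x\rangle|\le \big\||A|^{1/2}x\big\|\,\big\||A^*|^{1/2}x\big\|$. Taking the supremum over $x$ shows it suffices to prove the pointwise bound $\big\||A|^{1/2}x\big\|\,\big\||A^*|^{1/2}x\big\|\le \tfrac12\big(\|A\|+\big\||A|^{1/2}|A^*|^{1/2}\big\|\big)$ for every $x\in S_{\mathcal H}$.

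The key step, which I expect to be the main obstacle, is to produce here exactly the cross norm $\||A|^{1/2}|A^*|^{1/2}\|$ rather than the cruder $\|A\|$ (which would only recover $w(A)\le\|A\|$) or the $\sqrt{\|A^2\|}$ of the inequality (\ref{imp2}); this is precisely the feature that makes the bound an improvement. The device I would use is Buzano's inequality $|\langle p,e\rangle\langle e,q\rangle|\le \tfrac12\big(\|p\|\,\|q\|+|\langle p,q\rangle|\big)$, valid for any $p,q$ and unit $e$. Writing $\big\||A|^{1/2}x\big\|=\sup_{\|a\|=1}|\langle x,|A|^{1/2}a\rangle|$ and $\big\||A^*|^{1/2}x\big\|=\sup_{\|b\|=1}|\langle x,|A^*|^{1/2}b\rangle|$, I would apply Buzano with $e=x$, $p=|A|^{1/2}a$ and $q=|A^*|^{1/2}b$. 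Since $\big\||A|^{1/2}a\big\|\le\big\||A|^{1/2}\big\|=\|A\|^{1/2}$ and likewise for $b$, while $|\langle |A|^{1/2}a,|A^*|^{1/2}b\rangle|=|\langle |A^*|^{1/2}|A|^{1/2}a,b\rangle|\le \big\||A|^{1/2}|A^*|^{1/2}\big\|$, each application is bounded by $\tfrac12\big(\|A\|+\||A|^{1/2}|A^*|^{1/2}\|\big)$ uniformly in $a,b$. Taking the suprema over $a$ and $b$ recovers the product of norms on the left, and combining with the mixed Schwarz inequality and the identity of the first step yields $|\langle Ax,x\rangle|\le\tfrac12\big(\|A\|+\sqrt{r(|A||A^*|)}\big)$ for all unit $x$; a final supremum over $x$ completes the proof.
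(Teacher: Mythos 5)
Your proof is correct, but it takes a genuinely different route from the paper's. You share two ingredients with the paper: the mixed Schwarz inequality (your form $|\langle Ax,x\rangle|\le \bigl\||A|^{1/2}x\bigr\|\,\bigl\||A^*|^{1/2}x\bigr\|$ is the same as Lemma \ref{lem-th-oprt}, since $\langle |A|x,x\rangle^{1/2}=\bigl\||A|^{1/2}x\bigr\|$) and the identity $r(|A||A^*|)=\bigl\||A|^{1/2}|A^*|^{1/2}\bigr\|^2$, which you prove by essentially the same $r(XY)=r(YX)$ computation as Lemma \ref{lem-positive2}. The divergence is in the middle step. The paper applies the arithmetic--geometric mean inequality to pass to $\frac12\langle(|A|+|A^*|)x,x\rangle\le\frac12\bigl\||A|+|A^*|\bigr\|$ and then invokes Kittaneh's norm inequality for sums of positive operators (Lemma \ref{lem-positive1}) to split $\bigl\||A|+|A^*|\bigr\|\le\|A\|+\bigl\||A|^{1/2}|A^*|^{1/2}\bigr\|$. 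You instead keep the product form and bound it pointwise via Buzano's inequality, dualizing each factor as $\sup_{\|a\|=1}|\langle x,|A|^{1/2}a\rangle|$, and your application with $e=x$, $p=|A|^{1/2}a$, $q=|A^*|^{1/2}b$ checks out: the suprema over $a$ and $b$ factor because each term of the product involves only one of the variables, $\bigl\||A|^{1/2}a\bigr\|\,\bigl\||A^*|^{1/2}b\bigr\|\le\|A\|$, and $|\langle |A|^{1/2}a,|A^*|^{1/2}b\rangle|\le\bigl\||A^*|^{1/2}|A|^{1/2}\bigr\|=\bigl\||A|^{1/2}|A^*|^{1/2}\bigr\|$. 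As for what each approach buys: the paper's route is modular and passes through the sharper intermediate estimate $w(A)\le\frac12\bigl\||A|+|A^*|\bigr\|$, which your argument never produces, but it rests on Kittaneh's 1997 lemma, whose proof is itself nontrivial; your route trades that lemma for Buzano's inequality, an elementary refinement of Cauchy--Schwarz, so the whole argument is more self-contained. Indeed your Buzano step proves slightly more than needed, namely $\bigl\|P^{1/2}x\bigr\|\,\bigl\|Q^{1/2}x\bigr\|\le\frac12\left(\sqrt{\|P\|\,\|Q\|}+\bigl\|P^{1/2}Q^{1/2}\bigr\|\right)$ for positive $P,Q$ and unit $x$ --- a pointwise, geometric-mean analogue of Lemma \ref{lem-positive1} for quadratic forms, which coincides with what the paper extracts from that lemma here because $\||A|\|=\||A^*|\|=\|A\|$.
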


\begin{remark}\label{rem}
	
	If $A\in \mathcal{B}(\mathcal{H})$, then $r\left(|A||A^*|\right)\leq w\left(|A||A^*|\right)\leq \|\left(|A||A^*|\right)\|=\|A^2\|.$ Hence, Theorem \ref{th-oprt1} improves (\ref{imp2}). To show proper improvement we consider 
	$A=\left(\begin{array}{ccc}
	1 & 4\\
	1 & 1
	\end{array}\right)$. Then $|A|=\left(\begin{array}{ccc}
	1 & 1\\
	1 & 4
	\end{array}\right)$ and  $|A^*|=\left(\begin{array}{ccc}
	4 & 1\\
	1 & 1
	\end{array}\right)$. It is easy to see that $r\left(|A||A^*|\right)=9<\|\left(|A||A^*|\right)\|=\|A^2\|=\sqrt{59+10\sqrt{34}}\approx 10.83.$

\end{remark}

In order to prove Theorem \ref{th-oprt1} we need the following sequence of lemmas. First lemma can be found in \cite{K}.

\begin{lemma}$($\cite[Cor. 2]{K}$)$\label{lem-positive1}
Let $A,B\in \mathcal{B}(\mathcal{H})$ be positive operators. Then
\[\|A+B\|\leq \max\{\|A\|, \|B\|  \}+\left\|A^{1/2}B^{1/2}\right\|.\]
\end{lemma}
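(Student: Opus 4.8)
The plan is to realize $\|A+B\|$ as the norm of a single $2\times 2$ positive operator matrix and then estimate that matrix norm by splitting it into a block-diagonal piece and an off-diagonal piece. First I would pass to the Hilbert space $\mathcal{H}\oplus\mathcal{H}$ and consider the operator
\[
T=\begin{pmatrix} A^{1/2} & 0 \\ B^{1/2} & 0\end{pmatrix}.
\]
A direct computation gives
\[
T^*T=\begin{pmatrix} A+B & 0 \\ 0 & 0\end{pmatrix},
\qquad
TT^*=\begin{pmatrix} A & A^{1/2}B^{1/2} \\ B^{1/2}A^{1/2} & B\end{pmatrix}.
\]
Since $\|T^*T\|=\|TT^*\|=\|T\|^2$ for any bounded operator, and the block-diagonal operator $T^*T$ has norm exactly $\|A+B\|$, I obtain the key identity
\[
\|A+B\|=\left\|\begin{pmatrix} A & A^{1/2}B^{1/2} \\ B^{1/2}A^{1/2} & B\end{pmatrix}\right\|.
\]

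Next, writing $C=A^{1/2}B^{1/2}$ (so that $C^*=B^{1/2}A^{1/2}$, using that $A^{1/2},B^{1/2}$ are self-adjoint, and $\|C^*\|=\|C\|=\|A^{1/2}B^{1/2}\|$), I would decompose the block matrix as
\[
\begin{pmatrix} A & C \\ C^* & B\end{pmatrix}
=\begin{pmatrix} A & 0 \\ 0 & B\end{pmatrix}
+\begin{pmatrix} 0 & C \\ C^* & 0\end{pmatrix}
\]
and apply the triangle inequality for the operator norm. The block-diagonal term has norm $\max\{\|A\|,\|B\|\}$, so it remains to show that the self-adjoint off-diagonal term has norm exactly $\|C\|$. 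Combining these with the identity above yields precisely $\|A+B\|\leq \max\{\|A\|,\|B\|\}+\|A^{1/2}B^{1/2}\|$.

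The one step needing genuine care is the evaluation of the off-diagonal norm. Here I would use self-adjointness: squaring gives
\[
\begin{pmatrix} 0 & C \\ C^* & 0\end{pmatrix}^2
=\begin{pmatrix} CC^* & 0 \\ 0 & C^*C\end{pmatrix},
\]
whose norm is $\max\{\|CC^*\|,\|C^*C\|\}=\|C\|^2$; since the off-diagonal matrix is self-adjoint, its norm is the square root of the norm of its square, namely $\|C\|=\|A^{1/2}B^{1/2}\|$. I expect this to be the main (though mild) obstacle, as the rest of the argument is a routine unitary-free manipulation of block matrices. An alternative that avoids the $2\times 2$ machinery would be to estimate $\sup_{\|x\|=1}\big(\|A^{1/2}x\|^2+\|B^{1/2}x\|^2\big)$ directly, but the operator-matrix route keeps the two diagonal norms and the cross term $\|A^{1/2}B^{1/2}\|$ cleanly separated and is the most transparent path to the stated bound.
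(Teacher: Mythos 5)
Your proof is correct and complete, but note that the paper simply quotes this lemma from Kittaneh \cite[Cor. 2]{K} without proof, so there is no in-paper argument to match it against; the fair comparison is with Kittaneh's original derivation and with the closely related Lemma \ref{lem3} of this paper. Kittaneh starts from the same observation you do --- with $T=\begin{pmatrix} A^{1/2} & 0\\ B^{1/2} & 0\end{pmatrix}$ one has $\|A+B\|=\|T^*T\|=\|TT^*\|=\left\|\begin{pmatrix} A & C\\ C^* & B\end{pmatrix}\right\|$, where $C=A^{1/2}B^{1/2}$ --- but instead of splitting off the diagonal he dominates the whole block matrix by the scalar matrix $\begin{pmatrix} \|A\| & \|C\|\\ \|C\| & \|B\|\end{pmatrix}$ (the Hou--Du comparison, Lemma \ref{lem2} in this paper), which yields the sharper intermediate bound $\frac{1}{2}\left(\|A\|+\|B\|+\sqrt{\left(\|A\|-\|B\|\right)^2+4\|C\|^2}\right)$, from which the stated inequality follows via $\sqrt{x^2+4c^2}\le x+2c$; this is precisely the technique the authors themselves deploy in Lemma \ref{lem3} to prove $\|AA^*+B^*B\|\le\mu(A,B)$. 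Your triangle-inequality decomposition into $\mathrm{diag}(A,B)$ plus the self-adjoint off-diagonal part is more elementary --- it needs only the $C^*$-identity $\|S\|^2=\|S^2\|$ for self-adjoint $S$ rather than Hou--Du --- at the price of landing directly on the weaker $\max$-plus-cross-term bound, which is all the lemma asserts. Every step you flag checks out: the computations of $T^*T$ and $TT^*$, the identity $\|\mathrm{diag}(A,B)\|=\max\{\|A\|,\|B\|\}$, and the evaluation of the off-diagonal norm as $\|C\|$ by squaring.
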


The second lemma which contains a mixed schwarz inequality, can be found in \cite[pp. 75-76]{halmos}.

\begin{lemma}$($\cite[pp. 75-76]{halmos}$)$\label{lem-th-oprt}
Let $A\in \mathcal{B}(\mathcal{H})$. Then 
\[|\langle Ax,x\rangle|\leq \langle |A|x,x\rangle^{1/2}~~\langle |A^*|x,x\rangle^{1/2},~~\forall~~x\in \mathcal{H}.\]
\end{lemma}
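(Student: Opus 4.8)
The plan is to derive the inequality from the polar decomposition together with the ordinary Cauchy--Schwarz inequality. I would write $A = U|A|$, where $U$ is the partial isometry furnished by the polar decomposition, with initial space $\overline{\mathrm{Ran}(|A|)}$; thus $U^*U$ is the orthogonal projection onto $\overline{\mathrm{Ran}(|A|)}$, and in particular $U^*U|A| = |A|$. The idea is to split the single power of $|A|$ sitting inside $\langle Ax,x\rangle$ into two halves and distribute one half onto each slot of the inner product, so that a bare Cauchy--Schwarz estimate already produces the right shape.

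Concretely, for $x\in\mathcal{H}$ I would compute
\[
\langle Ax,x\rangle = \langle U|A|x,x\rangle = \big\langle |A|^{1/2}x,\, |A|^{1/2}U^*x\big\rangle,
\]
the last equality using $(U|A|^{1/2})^* = |A|^{1/2}U^*$. Applying Cauchy--Schwarz to the right-hand side gives
\[
|\langle Ax,x\rangle| \le \big\||A|^{1/2}x\big\|\;\big\||A|^{1/2}U^*x\big\|.
\]
The first factor squares to $\||A|^{1/2}x\|^2 = \langle |A|x,x\rangle$ at once, which already matches the first term on the right-hand side of the claimed bound.

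The crux is to identify the second factor. I would expand $\||A|^{1/2}U^*x\|^2 = \langle |A|U^*x,\,U^*x\rangle = \langle U|A|U^*x,\,x\rangle$ and then establish the operator identity $U|A|U^* = |A^*|$. For this, note first that $U|A|U^* = (U|A|^{1/2})(U|A|^{1/2})^*$ is positive, and second that, using $U^*U|A| = |A|$,
\[
(U|A|U^*)^2 = U|A|(U^*U)|A|U^* = U|A|^2U^* = (U|A|)(U|A|)^* = AA^* = |A^*|^2.
\]
Since $U|A|U^*$ and $|A^*|$ are positive operators with the same square, uniqueness of the positive square root forces $U|A|U^* = |A^*|$. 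Hence $\||A|^{1/2}U^*x\|^2 = \langle |A^*|x,x\rangle$, and substituting both factors into the Cauchy--Schwarz estimate yields the stated inequality for every $x\in\mathcal{H}$.

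The main obstacle I anticipate is precisely the justification of $U|A|U^* = |A^*|$: the positivity of $U|A|U^*$ and the relation $U^*U|A| = |A|$ (equivalently, that $|A|$ maps into the initial space of $U$) are the two places where the structural properties of the polar decomposition must be invoked with care, and once those are in hand everything else is routine. An alternative route, avoiding the partial isometry, would be to fix $x$ and apply the scalar Cauchy--Schwarz inequality to $\langle A^{1/2}\!\cdots\rangle$-type decompositions, but the polar-decomposition argument above is the most transparent and is the one I would carry out.
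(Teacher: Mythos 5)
Your proof is correct and complete: the identity $U|A|U^*=|A^*|$ is justified exactly where it needs to be (positivity of $U|A|U^*$ plus $U^*U|A|=|A|$ and uniqueness of the positive square root), and the rest is a clean application of Cauchy--Schwarz. The paper itself gives no proof of this lemma, citing it from Halmos, and your polar-decomposition argument is essentially the standard one found in that source (Halmos applies the Cauchy--Schwarz inequality for the semi-inner product $\langle |A|\cdot,\cdot\rangle$ directly instead of splitting off $|A|^{1/2}$, an equivalent step), so your attempt matches the intended proof.
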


The third lemma is as follows.

\begin{lemma}\label{lem-positive2}
Let $A,B\in \mathcal{B}(\mathcal{H})$ be positive operators. Then $$r(AB)=\left\|A^{1/2}B^{1/2}\right\|^2.$$
\end{lemma}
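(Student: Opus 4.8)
The plan is to prove Lemma~\ref{lem-positive2}, namely that for positive operators $A,B\in\mathcal{B}(\mathcal{H})$ we have $r(AB)=\|A^{1/2}B^{1/2}\|^2$. The key idea is to exploit the fact that the spectrum of a product is essentially insensitive to the order of the factors, together with the observation that a conjugation of the form $X\mapsto A^{1/2}XA^{1/2}$ produces a \emph{positive} operator whose spectral radius is computable as an operator norm.

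The first step is to use the standard fact that for any $S,T\in\mathcal{B}(\mathcal{H})$ the nonzero spectra of $ST$ and $TS$ coincide, and hence $r(ST)=r(TS)$. Applying this with $S=A^{1/2}$ and $T=A^{1/2}B$ gives
\[
r(AB)=r\bigl(A^{1/2}\cdot A^{1/2}B\bigr)=r\bigl(A^{1/2}B\cdot A^{1/2}\bigr)=r\bigl(A^{1/2}BA^{1/2}\bigr).
\]
The payoff is that $A^{1/2}BA^{1/2}$ is a positive operator: since $B$ is positive we may write $B=(B^{1/2})^*B^{1/2}$, so $A^{1/2}BA^{1/2}=(B^{1/2}A^{1/2})^*(B^{1/2}A^{1/2})=|B^{1/2}A^{1/2}|^2\geq 0$, using that $A^{1/2}$ is self-adjoint.

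The second step is to convert the spectral radius of this positive operator into a norm. For a positive (hence self-adjoint) operator $P$ one has $r(P)=\|P\|$. Thus
\[
r(AB)=r\bigl(A^{1/2}BA^{1/2}\bigr)=\bigl\|A^{1/2}BA^{1/2}\bigr\|=\bigl\|(B^{1/2}A^{1/2})^*(B^{1/2}A^{1/2})\bigr\|=\bigl\|B^{1/2}A^{1/2}\bigr\|^2,
\]
where the last equality is the $C^*$-identity $\|X^*X\|=\|X\|^2$. Finally, since $\|B^{1/2}A^{1/2}\|=\|(A^{1/2}B^{1/2})^*\|=\|A^{1/2}B^{1/2}\|$, we conclude $r(AB)=\|A^{1/2}B^{1/2}\|^2$, as claimed.

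I do not anticipate a genuine obstacle here; the argument is a short assembly of standard facts. The only point requiring a little care is the very first step: the equality $r(ST)=r(TS)$ for the spectral radius is safe because $ST$ and $TS$ share the same nonzero spectrum even in infinite dimensions, so one should invoke that precise statement rather than the false claim that $\sigma(ST)=\sigma(TS)$. Everything else---positivity of $A^{1/2}BA^{1/2}$, the identity $r=\|\cdot\|$ on positive operators, and the $C^*$-identity---is routine, so the proof should be just a few lines.
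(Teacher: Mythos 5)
Your proof is correct and is essentially the same argument as the paper's: both use the commutation property $r(ST)=r(TS)$ of the spectral radius to pass from $AB$ to $A^{1/2}BA^{1/2}=A^{1/2}B^{1/2}B^{1/2}A^{1/2}$, recognize this as a positive operator ($X^*X$ or $XX^*$ with $X$ a product of the square roots), and finish with $r=\|\cdot\|$ on self-adjoint operators plus the $C^*$-identity. The only cosmetic difference is that the paper writes the positive operator as $\bigl(A^{1/2}B^{1/2}\bigr)\bigl(A^{1/2}B^{1/2}\bigr)^*$, which avoids your final step of noting $\|B^{1/2}A^{1/2}\|=\|A^{1/2}B^{1/2}\|$; your explicit care about the nonzero-spectrum subtlety in $r(ST)=r(TS)$ is a point the paper glosses over.
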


\begin{proof}
By commutativity property of the spectral radius we have that 
\begin{eqnarray*}
r(AB)&=&r\left(A^{1/2}A^{1/2}B^{1/2}B^{1/2}\right)=r\left(A^{1/2}B^{1/2}B^{1/2}A^{1/2}\right)\\
&=&r\left( A^{1/2}B^{1/2} \left(A^{1/2}B^{1/2}\right)^* \right)=\left\| A^{1/2}B^{1/2} \left(A^{1/2}B^{1/2}\right)^* \right\|\\
&=&\left\|A^{1/2}B^{1/2}\right\|^2.
\end{eqnarray*}
\end{proof}
Now we prove Theorem \ref{th-oprt1}.\\

\noindent \textbf{Proof of Theorem \ref{th-oprt1}.} Let $x\in S_{\mathcal{H}}.$ Then by Lemma \ref{lem-th-oprt} we get,
\begin{eqnarray*}
|\langle Ax,x\rangle|&\leq& \langle |A|x,x\rangle^{1/2}~~\langle |A^*|x,x\rangle^{1/2}\\
&\leq&  \frac{1}{2}(\langle |A|x,x\rangle+\langle |A^*|x,x\rangle)\\
&\leq& \frac{1}{2} \left\|~~ |A|+|A^*| ~~\right\|\\
&\leq& \frac{1}{2} \left(\|A\|+ \left\||A|^{1/2}|A^*|^{1/2}\right\|   \right), ~~\textit{by Lemma \ref{lem-positive1}}\\
&=& \frac{1}{2} \left(\|A\|+ \sqrt{r\left(|A||A^*|\right)}  \right), ~~\textit{by Lemma \ref{lem-positive2}}.
\end{eqnarray*}
Hence, by taking supremum over $x\in S_{\mathcal{H}}$ we get,
\begin{eqnarray*}
w(A) &\leq& \frac{1}{2}\left( \|A\|+\sqrt{r\left(|A||A^*|\right)}\right),
\end{eqnarray*}
This completes the proof.

As an application of Theorem \ref{th-oprt1}, we prove the following corollary.

\begin{cor}\label{corollary1}
	Let $A\in \mathcal{B}(\mathcal{H}).$ If $r(|A||A^*|)=0$,  then $w(A)=\frac{\|A\|}{2}.$  
\end{cor}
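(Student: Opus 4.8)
The plan is to combine the upper bound furnished by Theorem \ref{th-oprt1} with the standard lower bound for the numerical radius recorded in (\ref{eqv}). Since the hypothesis $r(|A||A^*|)=0$ makes the square-root term in Theorem \ref{th-oprt1} vanish, I would first substitute it directly into the conclusion of that theorem to obtain
\[
w(A) \leq \frac{1}{2}\left(\|A\|+\sqrt{r(|A||A^*|)}\right) = \frac{1}{2}\left(\|A\|+\sqrt{0}\right) = \frac{\|A\|}{2}.
\]

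Next I would invoke the elementary lower bound $\frac{1}{2}\|A\| \leq w(A)$ from (\ref{eqv}), which holds for every $A \in \mathcal{B}(\mathcal{H})$ without any further hypothesis. Placing the two estimates side by side gives $\frac{\|A\|}{2} \leq w(A) \leq \frac{\|A\|}{2}$, and the squeeze forces equality throughout, so $w(A)=\frac{\|A\|}{2}$.

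There is essentially no obstacle here: the entire substantive content is carried by Theorem \ref{th-oprt1}, and the corollary is just the observation that its upper bound meets the universal lower bound $\frac{1}{2}\|A\|$ exactly when the spectral-radius correction term is zero. The only point worth verifying is that (\ref{eqv}) is available unconditionally so that no side hypotheses on $A$ creep in, which is indeed the case.
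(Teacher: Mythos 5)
Your proposal is correct and is exactly the paper's argument: the paper also sandwiches $w(A)$ between the lower bound $\frac{1}{2}\|A\|$ from (\ref{eqv}) and the upper bound of Theorem \ref{th-oprt1}, and then sets $r(|A||A^*|)=0$ to force equality. Nothing is missing.
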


\begin{proof}
	It follows from (\ref{eqv}) and Theorem \ref{th-oprt1} that $$\frac{\|A\|}{2}\leq w(A)\leq \frac{1}{2}\left( \|A\|+\sqrt{r\left(|A||A^*|\right)}\right).$$ This implies that if $r(|A||A^*|)=0$,  then $w(A)=\frac{\|A\|}{2}.$  
\end{proof}

\begin{remark}\label{ex}
	It should be mentioned here that the converse of Corollary \ref{corollary1} does not hold if $\dim (\mathcal{H})\geq 3.$ As for example, we consider 
	$A=\left(\begin{array}{ccc}
		0 & 3 & 0\\
		0 & 0 & 0\\
		0 & 0 & 1
	\end{array}\right)$. Then we see that $w(A)=\frac{3}{2}=\frac{\|A\|}{2}$, but $r(|A||A^*|)\neq 0.$
\end{remark}

The following corollary is an immediate consequnece of Theorem \ref{th-oprt1}.

\begin{cor}\label{corollary2}
	Let $A\in \mathcal{B}(\mathcal{H}).$ If $w(A)=\frac{1}{2}\left(\|A\|+\sqrt{\|A^2\|}\right)$, then $r(|A||A^*|)=\|A^2\|.$
	
\end{cor}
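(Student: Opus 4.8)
The plan is to deduce the corollary as a squeeze: I would assemble a chain of inequalities in which the hypothesized value of $w(A)$ occupies the topmost position, thereby forcing every intermediate inequality to collapse to an equality.

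First I would record the key estimate supplied by Remark \ref{rem}, namely
\[
r\left(|A||A^*|\right)\leq \left\||A||A^*|\right\|=\|A^2\|.
\]
Since the square root is monotone increasing, this yields $\sqrt{r(|A||A^*|)}\leq \sqrt{\|A^2\|}$, and hence
\[
\frac{1}{2}\left(\|A\|+\sqrt{r\left(|A||A^*|\right)}\right)\leq \frac{1}{2}\left(\|A\|+\sqrt{\|A^2\|}\right).
\]
Next I would prepend Theorem \ref{th-oprt1}, which gives $w(A)\leq \frac{1}{2}\left(\|A\|+\sqrt{r(|A||A^*|)}\right)$, producing the two-step chain
\[
w(A)\leq \frac{1}{2}\left(\|A\|+\sqrt{r\left(|A||A^*|\right)}\right)\leq \frac{1}{2}\left(\|A\|+\sqrt{\|A^2\|}\right).
\]

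Finally, I would invoke the hypothesis $w(A)=\frac{1}{2}\left(\|A\|+\sqrt{\|A^2\|}\right)$. This makes the leftmost and rightmost quantities in the chain coincide, so both displayed inequalities must in fact be equalities. Equating the middle and right terms and cancelling the common $\frac{1}{2}\|A\|$ forces $\sqrt{r(|A||A^*|)}=\sqrt{\|A^2\|}$, i.e. $r(|A||A^*|)=\|A^2\|$, which is the desired conclusion. There is no substantial obstacle here, as the proof is a direct sandwich argument; the only point meriting a moment's care is the identity $\left\||A||A^*|\right\|=\|A^2\|$ used at the outset, but this is precisely the content of Remark \ref{rem} and so may be cited without further work.
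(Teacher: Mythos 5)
Your proposal is correct and is essentially identical to the paper's own proof: the paper likewise chains Theorem \ref{th-oprt1} with the estimate $r(|A||A^*|)\leq \|A^2\|$ from Remark \ref{rem} and lets the hypothesis squeeze the chain into equalities. Your write-up merely spells out the cancellation step that the paper leaves implicit.
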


\begin{proof}
	Using Remark \ref{rem}, it follows from  Theorem \ref{th-oprt1} that 
	\begin{eqnarray*}
		w(A) \leq \frac{1}{2}\left( \|A\|+\sqrt{r\left(|A||A^*|\right)}\right)\leq \frac{1}{2}\left(\|A\|+\sqrt{\|A^2\|}\right).
	\end{eqnarray*}
	This implies that if $w(A)=\frac{1}{2}\left(\|A\|+\sqrt{\|A^2\|}\right)$, then $r(|A||A^*|)=\|A^2\|.$
\end{proof}

\begin{remark}
	It should be mentioned that the converse of Corollary \ref{corollary2} is not true. Considering the same example as in Remark \ref{ex}, i.e., $A=\left(\begin{array}{ccc}
		0 & 3 & 0\\
		0 & 0 & 0\\
		0 & 0 & 1
	\end{array}\right)$, we see that  $r(|A||A^*|)=\|A^2\|=1,$ but $w(A)=\frac{3}{2}< 2=\frac{1}{2}\left(\|A\|+\sqrt{\|A^2\|}\right)$.
\end{remark}

We give a sufficient condition for $w(A) = \frac{1}{2}\left( \|A\|+\sqrt{r\left(|A||A^*|\right)}\right)$, when $A$ is a complex $n\times n$ matrix.

\begin{prop}\label{prop1}
	Let $A$ be a complex $n\times n$ matrix. Suppose $A$ satisfies either one of the following conditions.
	
	\noindent $(i)$ $A$ is unitarily similar to $[\alpha ] \oplus B$, where $B$ is an $(n-1) \times (n-1)$ matrix with $\|B\|\leq |\alpha|.$ 
	
	\noindent $(ii)$ $r(|A||A^*|)=0.$\\
	\noindent Then,  $w(A) = \frac{1}{2}\left( \|A\|+\sqrt{r\left(|A||A^*|\right)}\right)$.
	
\end{prop}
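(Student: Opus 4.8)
The plan is to verify the two sufficient conditions separately, since they force equality for quite different reasons. Condition $(ii)$ is immediate: if $r(|A||A^*|)=0$, then the right-hand side of the asserted equality collapses to $\frac{1}{2}\|A\|$, while Corollary \ref{corollary1} already gives $w(A)=\frac{\|A\|}{2}$, so the two sides agree. For condition $(i)$ the key observation is that each of the three quantities in the statement, namely $w(\cdot)$, $\|\cdot\|$, and $r(|\cdot|\,|\cdot^*|)$, is invariant under unitary similarity; indeed for a unitary $U$ one has $|U^*AU|=U^*|A|U$ and $|U^*A^*U|=U^*|A^*|U$, so $|U^*AU|\,|(U^*AU)^*|=U^*\big(|A|\,|A^*|\big)U$ is unitarily similar to $|A|\,|A^*|$. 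I may therefore assume without loss of generality that $A=[\alpha]\oplus B$ with $\|B\|\leq|\alpha|$, and compute everything from this block structure.

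Next I would evaluate the three quantities. From $A^*A=[|\alpha|^2]\oplus B^*B$ and $AA^*=[|\alpha|^2]\oplus BB^*$, taking positive square roots block by block gives $|A|=[|\alpha|]\oplus|B|$ and $|A^*|=[|\alpha|]\oplus|B^*|$, hence $|A|\,|A^*|=[|\alpha|^2]\oplus |B|\,|B^*|$. Since the spectrum of a direct sum is the union of the spectra, $r(|A|\,|A^*|)=\max\{|\alpha|^2,\, r(|B|\,|B^*|)\}$; applying Remark \ref{rem} to $B$ yields $r(|B|\,|B^*|)\leq\|B^2\|\leq\|B\|^2\leq|\alpha|^2$, so $r(|A|\,|A^*|)=|\alpha|^2$ and $\sqrt{r(|A|\,|A^*|)}=|\alpha|$. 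For the norm, $\|A\|=\max\{|\alpha|,\|B\|\}=|\alpha|$. Finally, using that the numerical radius of a direct sum is the maximum of the summands' numerical radii (equivalently, that $W(A)$ is the convex hull of $\{\alpha\}$ and $W(B)$, on which the convex function $|\cdot|$ attains its maximum at an extreme point), $w(A)=\max\{|\alpha|,\, w(B)\}$, and $w(B)\leq\|B\|\leq|\alpha|$ forces $w(A)=|\alpha|$.

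Substituting these into the right-hand side gives $\frac{1}{2}\big(\|A\|+\sqrt{r(|A|\,|A^*|)}\big)=\frac{1}{2}(|\alpha|+|\alpha|)=|\alpha|=w(A)$, which is the claimed equality. The individual steps are routine bookkeeping of the direct-sum decomposition; the only point needing genuine care is the evaluation of $r(|A|\,|A^*|)$, where one must invoke Remark \ref{rem} rather than a crude norm estimate in order to pin $r(|B|\,|B^*|)$ below $|\alpha|^2$ and thereby make $r(|A|\,|A^*|)$ equal to exactly $|\alpha|^2$. I expect this to be the main, and only mild, obstacle in the argument.
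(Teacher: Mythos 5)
Your proof is correct and takes essentially the same route as the paper: case $(ii)$ is dispatched via Corollary \ref{corollary1}, and case $(i)$ by computing $w(A)=\|A\|=|\alpha|$ and $r(|A||A^*|)=|\alpha|^2$ from the direct-sum structure, which is precisely the verification the paper leaves to the reader as ``not difficult to verify.'' One small quibble: your closing claim that Remark \ref{rem} is genuinely needed is an overstatement, since the crude estimate $r\left(|B||B^*|\right)\leq \left\||B|\right\|\left\||B^*|\right\|=\|B\|^2\leq |\alpha|^2$ already pins $r(|A||A^*|)$ at exactly $|\alpha|^2$.
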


\begin{proof}
	Let $(i)$ holds. Then $w(A)=|\alpha|$ and $\|A\|=|\alpha|$. Also it is not difficult to verify that $r(|A||A^*|)=|\alpha|^2.$ Hence, $\frac{1}{2}\left( \|A\|+\sqrt{r\left(|A||A^*|\right)}\right)=|\alpha|$. Now let $(ii)$ holds. Then from Corollary \ref{corollary1} we get, $w(A) = \frac{1}{2}\left( \|A\|+\sqrt{r\left(|A||A^*|\right)}\right)=\frac{\|A\|}{2}$. Thus, we complete the proof.
	
\end{proof}

Next we give a generalized result of Theorem \ref{th-oprt1}. For this purpose we need the following lemma, which is the generalization of Lemma \ref{lem-th-oprt}.

\begin{lemma}$(${\cite[Th. 5]{K88}}$)$.\label{lemma-3g}
Let $A,B\in \mathcal{B}(\mathcal{H})$ be such that $|A|B=B^*|A|$ and let $f,g$ be non-negative continuous functions on $[0,\infty]$ satisfy $f(t)g(t)=t$, $\forall t\geq 0.$ Then, $|\langle ABx,y\rangle|\leq r(B) \|f(|A|)x\| \|g(|A^*|)y\|,~~\forall x,y \in \mathcal{H}. $
\end{lemma}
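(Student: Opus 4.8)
The plan is to imitate the proof of the mixed Schwarz inequality (Lemma \ref{lem-th-oprt}), using the polar decomposition $A=U|A|$ (with $U$ a partial isometry) together with the two algebraic consequences of the hypothesis $|A|B=B^*|A|$. First I would record that $A^*=|A|U^*$ and $|A^*|=U|A|U^*$, and that the intertwining relation makes $|A|B$ self-adjoint and, by an easy induction, yields $|A|B^n=(B^*)^n|A|$ for every positive integer $n$. I would also use the standard functional-calculus fact that $\|h(|A|)U^*y\|\le\|h(|A^*|)y\|$ for every non-negative continuous $h$, proved exactly as in the circle of ideas behind Lemma \ref{lem-th-oprt}: from $|A^*|=U|A|U^*$ one obtains $U\,h(|A|)^2U^*\le h(|A^*|)^2$.

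With these in hand the first half of the argument is routine. Writing $\langle ABx,y\rangle=\langle U|A|Bx,y\rangle$ and factoring $|A|=g(|A|)f(|A|)$ by functional calculus (legitimate since $f(t)g(t)=t$), I would move $U$ and $g(|A|)$ into the right-hand slot,
\[
\langle ABx,y\rangle=\langle f(|A|)Bx,\;g(|A|)U^*y\rangle,
\]
and apply the Cauchy--Schwarz inequality followed by $\|g(|A|)U^*y\|\le\|g(|A^*|)y\|$ to obtain
\[
|\langle ABx,y\rangle|\le \|f(|A|)Bx\|\,\|g(|A^*|)y\|.
\]
Thus everything reduces to the single estimate $\|f(|A|)Bx\|\le r(B)\,\|f(|A|)x\|$.

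The hard part will be precisely this last estimate, where the spectral radius $r(B)$ --- rather than the operator norm $\|B\|$ --- must appear; this is the only place where the hypothesis $|A|B=B^*|A|$ does real work. The conceptual mechanism I would exploit is that the intertwining relation forces $|A|^{1/2}B|A|^{-1/2}$ to be self-adjoint on the closure of the range of $|A|$, so that its norm equals its spectral radius, and the latter coincides with $r(B)$ by similarity. To avoid inverting $|A|$, which need not be invertible, I would prefer the self-contained route through Gelfand's formula: from $|A|B^n=(B^*)^n|A|$ one computes $\||A|^{1/2}B^nx\|^2=\langle |A|x,B^{2n}x\rangle\le \||A|^{1/2}x\|\,\||A|^{1/2}B^{2n}x\|$, and, writing $a_n=\||A|^{1/2}B^nx\|$, this reads $a_n^2\le a_0\,a_{2n}$; iterating and taking $2^{k}$-th roots, Gelfand's formula $r(B)=\lim_k\|B^{2^{k}}\|^{1/2^{k}}$ gives $a_1\le r(B)\,a_0$, which is exactly the required estimate for $f(t)=t^{1/2}$. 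The two obstacles that remain are to carry this control through the general factorization $f(t)g(t)=t$ and to dispose of the non-invertibility of $|A|$; for the latter I would approximate $|A|$ by $|A|+\varepsilon I$ and pass to the limit $\varepsilon\to0$, or equivalently compress to the range of $|A|$ and extend by zero on its kernel. I expect the delicate balancing of $f$ and $g$ across the two slots, so that the scalar factor is genuinely $r(B)$ and not merely $\|B\|$, to be the crux of the whole proof.
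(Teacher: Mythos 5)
First, a point of calibration: the paper never proves this lemma---it is imported verbatim from Kittaneh [K88, Th.~5] and is used downstream only through Theorem \ref{th-generalize} and Corollary \ref{product}, i.e.\ essentially in the case $f=g=\sqrt{t}$. So your attempt must be judged against what is actually true rather than against a proof in the paper. The first half of your plan is sound: the factorization $\langle ABx,y\rangle=\langle f(|A|)Bx,\,g(|A|)U^*y\rangle$, Cauchy--Schwarz, and the estimate $\|g(|A|)U^*y\|\le\|g(|A^*|)y\|$ (via $Ug^2(|A|)U^*\le g^2(|A^*|)$) are all correct. Your Gelfand iteration is also correct: from $|A|B^n=(B^*)^n|A|$ one gets $a_n^2\le a_0a_{2n}$ for $a_n=\||A|^{1/2}B^nx\|$, whence $\||A|^{1/2}Bx\|\le r(B)\||A|^{1/2}x\|$ (the degenerate case $a_0=0$ is immediate, since then $|A|Bx=B^*|A|x=0$). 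This proves the lemma for $f=g=\sqrt{t}$, namely $|\langle ABx,y\rangle|\le r(B)\langle|A|x,x\rangle^{1/2}\langle|A^*|y,y\rangle^{1/2}$, which is the Halmos--Reid mechanism and is the only instance of the lemma this paper ever invokes.

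The gap is exactly the step you deferred as "the crux," and it cannot be filled: the estimate $\|f(|A|)Bx\|\le r(B)\|f(|A|)x\|$ is \emph{false} for general $f$, because the hypothesis $|A|B=B^*|A|$ does not propagate to other functions of $|A|$ (already $|A|^2B=B^*|A|^2$ fails in general; the similarity $f(|A|)Bf(|A|)^{-1}$ is self-adjoint precisely when $f/g$ commutes appropriately, i.e.\ in essence only for $f=c\sqrt{t}$). Concretely, take
\[
A=\begin{pmatrix}1&0\\0&2\end{pmatrix},\qquad B=\begin{pmatrix}0&2\\1&0\end{pmatrix},
\]
so that $|A|=|A^*|=A$, $|A|B=B^*|A|=\begin{pmatrix}0&2\\2&0\end{pmatrix}$, and $r(B)=\sqrt{2}$. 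Then $\||A|Be_1\|=2>\sqrt{2}=r(B)\||A|e_1\|$, so your target inequality fails for $f(t)=t$. Worse, the same example refutes the lemma itself in the stated generality: with the admissible pair $f(t)=t$, $g\equiv 1$ one gets
\[
|\langle ABe_1,e_2\rangle|=2>\sqrt{2}=r(B)\,\|f(|A|)e_1\|\,\|g(|A^*|)e_2\|,
\]
and among the powers $f=t^{\alpha}$, $g=t^{1-\alpha}$ the inequality survives this example only at $\alpha=1/2$. So no "delicate balancing of $f$ and $g$" can rescue your reduction---nor any other argument: the statement, as reproduced in the paper for arbitrary $f,g$ with $fg=t$, is an over-generalization, and what is provable (by exactly your method) is the $f=g=\sqrt{t}$ form, which fortunately is all that Corollary \ref{product} requires. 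If one insists on general $f,g$, one must strengthen the hypothesis to an intertwining relation for $f(|A|)$ itself, e.g.\ $f(|A|)B=B^*f(|A|)$.
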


Using Lemma \ref{lemma-3g} and proceeding similarly  as in Theorem \ref{th-oprt1}, we can prove the following theorem.

\begin{theorem}\label{th-generalize}
Let $A,B\in \mathcal{B}(\mathcal{H})$  be such that $|A|B=B^*|A|$ and let $f$, $g$ be as in Lemma \ref{lemma-3g}. Then
\[w(AB)\leq \frac{r(B)}{2}\Big(\max \left\{ \|f(|A|)\|^2, \|g(|A^*|)\|^2 \right\} + \left\|~~  |f(|A|)|~~ |g(|A^*|)| ~~\right\|  \Big).\]   
\end{theorem}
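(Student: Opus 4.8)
The plan is to mirror the proof of Theorem \ref{th-oprt1} almost verbatim, replacing the mixed Schwarz inequality (Lemma \ref{lem-th-oprt}) with its generalization (Lemma \ref{lemma-3g}) and tracking the extra factor $r(B)$ throughout. First I would fix $x\in S_{\mathcal H}$ and apply Lemma \ref{lemma-3g} with $y=x$ to obtain
\[|\langle ABx,x\rangle|\leq r(B)\,\|f(|A|)x\|\,\|g(|A^*|)x\|.\]

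Next I would rewrite the two norms as quadratic forms of positive operators, namely $\|f(|A|)x\|^2=\langle f(|A|)^2 x,x\rangle$ and $\|g(|A^*|)x\|^2=\langle g(|A^*|)^2 x,x\rangle$, so that the bound reads $r(B)\,\langle f(|A|)^2x,x\rangle^{1/2}\langle g(|A^*|)^2x,x\rangle^{1/2}$. Applying the AM--GM inequality exactly as in the proof of Theorem \ref{th-oprt1} gives
\[|\langle ABx,x\rangle|\leq \frac{r(B)}{2}\Big(\langle f(|A|)^2x,x\rangle+\langle g(|A^*|)^2x,x\rangle\Big)\leq \frac{r(B)}{2}\big\|\,f(|A|)^2+g(|A^*|)^2\,\big\|,\]
since both operators are positive.

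I would then invoke Lemma \ref{lem-positive1} with the positive operators $f(|A|)^2$ and $g(|A^*|)^2$. This yields
\[\big\|f(|A|)^2+g(|A^*|)^2\big\|\leq \max\big\{\|f(|A|)^2\|,\|g(|A^*|)^2\|\big\}+\big\|\,(f(|A|)^2)^{1/2}(g(|A^*|)^2)^{1/2}\,\big\|,\]
and the product term simplifies to $\big\|\,|f(|A|)|\,|g(|A^*|)|\,\big\|$ (since the square roots of the squared operators recover the moduli appearing in the statement). Taking the supremum over $x\in S_{\mathcal H}$ on the left produces $w(AB)$, giving exactly the claimed inequality.

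The only genuine obstacle is bookkeeping with the functional calculus. Lemma \ref{lemma-3g} hands back $\|f(|A|)x\|$ and $\|g(|A^*|)x\|$, and I must verify that these coincide with $\langle f(|A|)^2x,x\rangle^{1/2}$ and $\langle g(|A^*|)^2x,x\rangle^{1/2}$; this holds because $f(|A|)$ and $g(|A^*|)$ are self-adjoint (both are continuous functions of positive operators), so $f(|A|)^*f(|A|)=f(|A|)^2$, and likewise for $g$. The hypothesis $|A|B=B^*|A|$ is needed solely to license the use of Lemma \ref{lemma-3g} and plays no further role. One should also note that in Theorem \ref{th-oprt1} the special choice $B=I$, $f(t)=g(t)=\sqrt t$ recovers the original statement, since then $r(B)=1$, $f(|A|)=|A|^{1/2}$, $g(|A^*|)=|A^*|^{1/2}$, the max term becomes $\|A\|$, and the product term becomes $\big\||A|^{1/2}|A^*|^{1/2}\big\|=\sqrt{r(|A||A^*|)}$ by Lemma \ref{lem-positive2}; this consistency check confirms the alignment of the two arguments.
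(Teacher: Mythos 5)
Your proposal is correct and is exactly the argument the paper intends: the paper gives no written proof of Theorem \ref{th-generalize}, saying only to use Lemma \ref{lemma-3g} and proceed as in Theorem \ref{th-oprt1}, which is precisely what you carried out (Lemma \ref{lemma-3g} with $y=x$, AM--GM, then Lemma \ref{lem-positive1} applied to the positive operators $f(|A|)^2$ and $g(|A^*|)^2$, noting $(f(|A|)^2)^{1/2}=|f(|A|)|$ and $\|f(|A|)^2\|=\|f(|A|)\|^2$). Your closing consistency check with $B=I$, $f(t)=g(t)=\sqrt{t}$ correctly recovers Theorem \ref{th-oprt1} via Lemma \ref{lem-positive2}.
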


Considering $f(t)=g(t)=\sqrt{t}$\,\, in Theorem \ref{th-generalize} we get the following corollary.

\begin{cor}\label{product}
Let $A,B\in \mathcal{B}(\mathcal{H})$  be such that $|A|B=B^*|A|$. Then
\begin{eqnarray*}
w(AB)&\leq& \frac{r(B)}{2}\left( \|A\|+ \sqrt{r\left(|A||A^*|\right)} \right)\\
&\leq& \frac{1}{4}\left( \|B\|+ \sqrt{r\left(|B||B^*|\right)} \right)\left( \|A\|+ \sqrt{r\left(|A||A^*|\right)} \right).
\end{eqnarray*}
\end{cor}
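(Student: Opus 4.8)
The plan is to specialize Theorem~\ref{th-generalize} to the admissible pair $f(t)=g(t)=\sqrt{t}$ and then to estimate the factor $r(B)$ separately by means of Theorem~\ref{th-oprt1}. These functions are non-negative and continuous on $[0,\infty]$ and satisfy $f(t)g(t)=t$, so they meet the hypotheses of Lemma~\ref{lemma-3g}; consequently, under the standing assumption $|A|B=B^*|A|$, the conclusion of Theorem~\ref{th-generalize} is available for this choice of $f,g$.

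First I would evaluate the three quantities appearing on the right-hand side of Theorem~\ref{th-generalize}. With $f(t)=g(t)=\sqrt{t}$ we have $f(|A|)=|A|^{1/2}$ and $g(|A^*|)=|A^*|^{1/2}$, both positive operators. Using $\|T\|^2=\|T^*T\|$ together with the standard identities $\||A|\|=\|A\|$ and $\||A^*|\|=\|A\|$, one gets $\||A|^{1/2}\|^2=\||A|\|=\|A\|$ and $\||A^*|^{1/2}\|^2=\||A^*|\|=\|A\|$, so the maximum term equals $\|A\|$. For the product term, since $|A|^{1/2}$ and $|A^*|^{1/2}$ are already positive, $\big\|\,|f(|A|)|\,|g(|A^*|)|\,\big\|=\big\||A|^{1/2}|A^*|^{1/2}\big\|$, and Lemma~\ref{lem-positive2} evaluates this to $\sqrt{r(|A||A^*|)}$. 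Substituting these into Theorem~\ref{th-generalize} produces the first asserted bound
\[w(AB)\leq \frac{r(B)}{2}\Big(\|A\|+\sqrt{r(|A||A^*|)}\Big).\]

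For the second inequality I would bound $r(B)$ on its own. Since the spectrum is contained in the closure of the numerical range, $r(B)\leq w(B)$, and applying Theorem~\ref{th-oprt1} to the operator $B$ yields $w(B)\leq \tfrac12\big(\|B\|+\sqrt{r(|B||B^*|)}\big)$; chaining these gives $r(B)\leq \tfrac12\big(\|B\|+\sqrt{r(|B||B^*|)}\big)$. Inserting this estimate into the first inequality delivers the stated product bound. The entire argument is really just a substitution followed by one invocation of Theorem~\ref{th-oprt1}, so there is no genuine obstacle; the only point demanding care is confirming that $f,g$ satisfy the hypotheses of Lemma~\ref{lemma-3g} and that $|f(|A|)|=|A|^{1/2}$, $|g(|A^*|)|=|A^*|^{1/2}$ as positive operators, after which the computation is routine.
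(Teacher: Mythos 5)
Your proposal is correct and follows exactly the paper's route: the paper derives this corollary by setting $f(t)=g(t)=\sqrt{t}$ in Theorem~\ref{th-generalize}, with the max term collapsing to $\|A\|$ and the product term evaluating to $\sqrt{r(|A||A^*|)}$ via Lemma~\ref{lem-positive2}, and the second line follows from $r(B)\leq w(B)$ together with Theorem~\ref{th-oprt1} applied to $B$. Your write-up simply makes explicit the computations the paper leaves to the reader.
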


\begin{remark}
 If $A,B\in \mathcal{B}(\mathcal{H})$  be such that $|A|B=B^*|A|$, then Alomari \cite[Cor. 3.2]{A} proved that
\begin{eqnarray}\label{alomari}
w(AB)\leq \frac{1}{4}\left(\|B\|+\sqrt{\|B^2\|} \right) \left(\|A\|+\sqrt{\|A^2\|} \right). 
\end{eqnarray}
Clearly our inequalities in Corollary \ref{product} improve on the inequality (\ref{alomari}). 

\end{remark}

\section{\textbf{Improvement of inequality (\ref{Fong})}}

\noindent In order to obtain an improvement of the inequality (\ref{Fong}) we need the following lemma \cite{BP1} . First, we note the Cartesian decomposition of $A\in \mathcal{B}(\mathcal{H})$, i.e., $A=\Re (A)+{\rm i} \Im (A)$, where $\Re (A)=\frac{A+A^*}{2}$ and $\Im (A)=\frac{A-A^*}{2{\rm i}}$.

\begin{lemma} $($\cite[Cor. 2.3]{BP1}$)$\label{lem1} 
Let $A\in \mathcal{B}(\mathcal{H})$. Then 
\[\|AA^*+A^*A\|\leq 4\left[ w^2(A)-\frac{c^2(\Re (A))+c^2(\Im (A))}{2} \right].\]

\end{lemma}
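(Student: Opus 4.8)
The plan is to reduce the inequality to the Cartesian-decomposition chain of \cite[Th.~2.1]{BP1} already recorded in the introduction, combined with one elementary operator identity. Write $B=\Re(A)$ and $C=\Im(A)$, so that $A=B+{\rm i}C$ with $B,C$ self-adjoint. First I would expand $A^*A=(B-{\rm i}C)(B+{\rm i}C)$ and $AA^*=(B+{\rm i}C)(B-{\rm i}C)$; the commutator terms ${\rm i}(BC-CB)$ carry opposite signs and cancel in the sum, giving $A^*A+AA^*=2(B^2+C^2)$. Hence $\|A^*A+AA^*\|=2\|B^2+C^2\|$, and it suffices to prove $\|B^2+C^2\|\le 2w^2(A)-c^2(B)-c^2(C)$.

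For the left-hand side I would use the triangle inequality together with the equality $\|T^2\|=\|T\|^2$ valid for self-adjoint $T$, to obtain $\|B^2+C^2\|\le\|B^2\|+\|C^2\|=\|B\|^2+\|C\|^2$. It then remains to show that $\|B\|^2+\|C\|^2+c^2(B)+c^2(C)\le 2w^2(A)$. This is where I would invoke the final inequality of the quoted chain \cite[Th.~2.1]{BP1}, namely $w^2(A)\ge\frac18\big(\|A+A^*\|^2+\|A-A^*\|^2\big)+\frac18 c^2(A+A^*)+\frac18 c^2(A-A^*)$. Using $A+A^*=2B$ and $A-A^*=2{\rm i}C$ together with the homogeneity relations $\|\lambda T\|=|\lambda|\,\|T\|$ and $c(\lambda T)=|\lambda|\,c(T)$, the four summands equal $\tfrac12\|B\|^2,\ \tfrac12\|C\|^2,\ \tfrac12 c^2(B),\ \tfrac12 c^2(C)$ respectively, so doubling gives exactly $2w^2(A)\ge\|B\|^2+\|C\|^2+c^2(B)+c^2(C)$. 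Chaining the two displays and multiplying the resulting bound for $\|B^2+C^2\|$ by $2$ yields $\|A^*A+AA^*\|\le 4\big[w^2(A)-\tfrac12(c^2(\Re(A))+c^2(\Im(A)))\big]$, as desired.

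The derivation above is short because all the genuine work is externalized to \cite[Th.~2.1]{BP1}: the appearance of the Crawford numbers $c(\Re(A))$ and $c(\Im(A))$, which is what makes this a strict sharpening of $\frac14\|A^*A+AA^*\|\le w^2(A)$, comes entirely from that refined lower bound. The identity $A^*A+AA^*=2(B^2+C^2)$ and the triangle-inequality step are routine; the only subtlety to keep straight is the bookkeeping of the factors $2$ arising from $A\pm A^*=2B,\,2{\rm i}C$ and their effect on the squared norms and squared Crawford numbers. Accordingly, if one instead wanted a self-contained proof, the real obstacle would be reproving the chain of \cite[Th.~2.1]{BP1}, i.e.\ establishing that the term $\frac18\big(c^2(A+A^*)+c^2(A-A^*)\big)$ may be added to the classical estimate $\frac18\big(\|A+A^*\|^2+\|A-A^*\|^2\big)\le w^2(A)$ without violating the bound $w^2(A)$.
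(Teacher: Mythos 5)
Your proof is correct, and since the paper itself offers no proof of this lemma --- it is imported verbatim from \cite[Cor.~2.3]{BP1} --- your derivation from the chain of \cite[Th.~2.1]{BP1} quoted in the paper's introduction is exactly the intended route: with $B=\Re(A)$, $C=\Im(A)$ the identity $A^*A+AA^*=2(B^2+C^2)$, the bound $\|B^2+C^2\|\leq \|B\|^2+\|C\|^2$, and the rescalings $\|A\pm A^*\|=2\|B\|, 2\|C\|$, $c(A+A^*)=2c(B)$, $c(A-A^*)=2c(C)$ all check out and combine to give precisely $\|AA^*+A^*A\|\leq 4\left[w^2(A)-\frac{c^2(\Re(A))+c^2(\Im(A))}{2}\right]$. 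One remark: your triangle-inequality step in fact re-proves the first link of the quoted chain, namely $\frac{1}{4}\|A^*A+AA^*\|\leq \frac{1}{8}\left(\|A+A^*\|^2+\|A-A^*\|^2\right)$, so the only genuinely external input your argument needs is the chain's final inequality $\frac{1}{8}\left(\|A+A^*\|^2+\|A-A^*\|^2\right)+\frac{1}{8}c^2(A+A^*)+\frac{1}{8}c^2(A-A^*)\leq w^2(A)$, which is indeed where, as you note, all the real work resides.
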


Now we prove the desired result.

\begin{theorem}\label{th1}
Let $A,B,X,Y\in \mathcal{B}(\mathcal{H})$. Then 
\[w(AXB \pm BYA)\leq 2\sqrt{2}\|B\|\max  \left\{\|X\|,\|Y\| \right\}\sqrt{ w^2(A)-\frac{c^2(\Re (A))+c^2(\Im (A))}{2}   }.\]

\end{theorem}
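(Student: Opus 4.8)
The plan is to first establish the intermediate bound
\[
w(AXB \pm BYA) \leq \sqrt{2}\,\|B\|\max\{\|X\|,\|Y\|\}\sqrt{\|A^*A+AA^*\|},
\]
and then invoke Lemma \ref{lem1} to finish. Since Lemma \ref{lem1} gives $\|AA^*+A^*A\| \le 4\big[w^2(A)-\tfrac{c^2(\Re (A))+c^2(\Im (A))}{2}\big]$, we have $\sqrt{\|A^*A+AA^*\|} \le 2\sqrt{w^2(A)-\tfrac{c^2(\Re (A))+c^2(\Im (A))}{2}}$, and the factor $2$ produced there combines with the $\sqrt 2$ above to reproduce the stated constant $2\sqrt 2$. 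Thus the second step is purely a substitution, and all the work lies in the intermediate bound.

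For the intermediate bound I would fix a unit vector $u \in S_{\mathcal H}$ and expand
\[
\langle (AXB \pm BYA)u,u\rangle = \langle XBu, A^*u\rangle \pm \langle YAu, B^*u\rangle,
\]
moving $A$ to the right as $A^*$ and $B$ to the right as $B^*$ via the adjoint relation. Applying the triangle inequality handles both signs simultaneously, and then Cauchy--Schwarz together with submultiplicativity of the operator norm gives
\[
|\langle (AXB \pm BYA)u,u\rangle| \le \|X\|\,\|Bu\|\,\|A^*u\| + \|Y\|\,\|Au\|\,\|B^*u\|.
\]
Bounding $\|Bu\|,\|B^*u\| \le \|B\|$ and factoring out $\max\{\|X\|,\|Y\|\}$ reduces the right-hand side to $\|B\|\max\{\|X\|,\|Y\|\}\big(\|Au\| + \|A^*u\|\big)$.

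The last step is the elementary estimate $\|Au\|+\|A^*u\| \le \sqrt 2\,\sqrt{\|Au\|^2+\|A^*u\|^2}$, where $\|Au\|^2+\|A^*u\|^2 = \langle (A^*A+AA^*)u,u\rangle \le \|A^*A+AA^*\|$ because $u$ is a unit vector; taking the supremum over $u \in S_{\mathcal H}$ yields the intermediate bound. I do not expect a serious obstacle, as the argument is a direct chain of the mixed/Cauchy--Schwarz inequality, the quadratic-mean--arithmetic-mean inequality, and Lemma \ref{lem1}. The only points requiring care are the bookkeeping that keeps $\|X\|$ paired with $\|A^*u\|$ and $\|Y\|$ paired with $\|Au\|$ before the symmetric bound $\max\{\|X\|,\|Y\|\}$ is introduced, and verifying that the triangle inequality cleanly absorbs the $\pm$ so that the two signs need not be treated separately.
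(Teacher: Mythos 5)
Your proof is correct and takes essentially the same route as the paper's: both expand $\langle(\cdot)u,u\rangle$ into two inner products with $A^*u$ and $Au$, apply Cauchy--Schwarz, use $a+b\leq\sqrt{2(a^2+b^2)}$ together with $\|Au\|^2+\|A^*u\|^2=\langle(A^*A+AA^*)u,u\rangle$, and finish with Lemma \ref{lem1}. The only difference is organizational --- the paper first proves the case $\|X\|,\|Y\|\leq 1$ for $w(AX\pm YA)$ and then introduces $B$ via the substitutions $X\mapsto XB$, $Y\mapsto BY$, whereas you absorb $\|X\|$, $\|Y\|$, $\|B\|$ directly in a single estimate, which is a harmless streamlining.
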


\begin{proof}
First we assume that $\|X\|\leq 1$ and $\|Y\|\leq 1$. Let $x\in S_{\mathcal{H}}$. Then we have
\begin{eqnarray*}
|\langle (AX\pm YA)x,x\rangle|&\leq& |\langle AXx,x\rangle|+|\langle YAx,x\rangle| \\
&=& |\langle Xx,A^*x\rangle|+|\langle Ax,Y^*x\rangle| \\
&\leq& \|A^*x\|+ \|Ax\|, ~~\textit{by Cauchy Schwarz inequality} \\
&\leq &  \sqrt{2(\|A^*x\|^2+ \|Ax\|^2)},~~\textit{by convexity of $ f(x)=x^2$}\\
&\leq&  \sqrt{2\|AA^*+A^*A\|}\\
&\leq& 2\sqrt{2}\sqrt{ w^2(A)-\frac{c^2(\Re (A))+c^2(\Im (A))}{2}   }, ~~\textit{by Lemma \ref{lem1}}.
\end{eqnarray*}
Hence, by taking supremum over $\|x\|=1$ we get,
\begin{eqnarray}\label{eqnth1}
w(AX\pm YA)&\leq& 2\sqrt{2}\sqrt{ w^2(A)-\frac{c^2(\Re (A))+c^2(\Im (A))}{2} }.
\end{eqnarray}
Now we consider the general case, i.e., $X,Y\in \mathcal{B}(\mathcal{H})$ be arbitrary operators. If $X=Y=0$ then Theorem \ref{th1} holds trivially. Let $\max  \left\{\|X\|,\|Y\| \right\}\neq 0.$ Then clearly $\left \| \frac{X}{\max  \left\{\|X\|,\|Y\| \right\}}\right\|\leq 1$ and $\left \| \frac{Y}{\max  \left\{\|X\|,\|Y\| \right\}}\right\|\leq 1$. So,  replacing $X$ and $Y$ by $\frac{X}{\max  \left\{\|X\|,\|Y\| \right\}}$ and $\frac{Y}{\max  \left\{\|X\|,\|Y\| \right\}}$, respectively, in (\ref{eqnth1}) we get,
\begin{eqnarray}\label{eqnth2}
w(AX\pm YA)\leq 2\sqrt{2}\max  \left\{\|X\|,\|Y\| \right\}\sqrt{ w^2(A)-\frac{c^2(\Re (A))+c^2(\Im (A))}{2} }.
\end{eqnarray}
Now replacing $X$ by $XB$ and $Y$ by $BY$ in (\ref{eqnth2}) we get,
\begin{eqnarray*}
w(AXB\pm BYA)\leq 2\sqrt{2}\max  \left\{\|XB\|,\|BY\| \right\}\sqrt{ w^2(A)-\frac{c^2(\Re (A))+c^2(\Im (A))}{2} },
\end{eqnarray*}
which implies that
\begin{eqnarray*}
w(AXB\pm BYA) \leq 2\sqrt{2} \|B\| \max  \left\{\|X\|,\|Y\| \right\}\sqrt{ w^2(A)-\frac{c^2(\Re (A))+c^2(\Im (A))}{2} }.
\end{eqnarray*}
\end{proof}

On the basis of Theorem \ref{th1} we prove the following corollary. 

\begin{cor}\label{corth1}
Let $A,B\in \mathcal{B}(\mathcal{H})$. Then
\begin{eqnarray}\label{eqncor1}
w(AB\pm BA) &\leq & 2\sqrt{2} \|B\| \sqrt{ w^2(A)-\frac{c^2(\Re (A))+c^2(\Im (A))}{2} }.
\end{eqnarray}
and 
\begin{eqnarray}\label{eqncor2}
w(AB\pm BA) &\leq & 2\sqrt{2} \|A\| \sqrt{ w^2(B)-\frac{c^2(\Re (B))+c^2(\Im (B))}{2} }.
\end{eqnarray}
\end{cor}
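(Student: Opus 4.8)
The plan is to derive Corollary \ref{corth1} directly from Theorem \ref{th1} by suitable specialisation of the operators $X$ and $Y$. The key observation is that the commutators $AB \pm BA$ are instances of the more general expression $AXB \pm BYA$ handled in Theorem \ref{th1}.

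For inequality (\ref{eqncor1}), I would set $X = Y = I$, the identity operator. Then $AXB \pm BYA = AB \pm BA$, and since $\|I\| = 1$ we have $\max\{\|X\|, \|Y\|\} = 1$. Substituting directly into the conclusion of Theorem \ref{th1} yields
\begin{eqnarray*}
w(AB \pm BA) \leq 2\sqrt{2}\,\|B\|\sqrt{ w^2(A)-\frac{c^2(\Re (A))+c^2(\Im (A))}{2} },
\end{eqnarray*}
which is exactly (\ref{eqncor1}).

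For inequality (\ref{eqncor2}), I would exploit the symmetry of the commutator together with the freedom to rename the distinguished operator. Writing $BA \pm AB = \pm(AB \mp BA)$ shows that $w(BA \pm AB) = w(AB \mp BA)$, so an inequality of the form (\ref{eqncor1}) with the roles of $A$ and $B$ interchanged gives a bound on the same quantity $w(AB \pm BA)$. Concretely, applying Theorem \ref{th1} with $A$ and $B$ swapped (again taking $X = Y = I$) bounds $w(BA \pm AB)$, and hence $w(AB \pm BA)$, by $2\sqrt{2}\,\|A\|\sqrt{ w^2(B)-\tfrac{c^2(\Re (B))+c^2(\Im (B))}{2} }$, which is (\ref{eqncor2}).

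Since both parts are immediate substitutions into an already-proved theorem, there is essentially no analytic obstacle here; the only point requiring a moment's care is the sign bookkeeping for the second inequality, where one must confirm that the $\pm$ case of $w(AB \pm BA)$ matches a $\mp$ case of the swapped expression and that the numerical radius is unaffected by the overall sign (as $w(\lambda T) = |\lambda| w(T)$). Once that identification is made explicit, both (\ref{eqncor1}) and (\ref{eqncor2}) follow at once.
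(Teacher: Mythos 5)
Your proposal is correct and follows exactly the paper's own route: the authors also obtain (\ref{eqncor1}) by taking $X=Y=I$ in Theorem \ref{th1}, and then deduce (\ref{eqncor2}) by interchanging $A$ and $B$. Your extra remark on the sign bookkeeping, using $w(-T)=w(T)$ to identify $w(BA\pm AB)$ with $w(AB\pm BA)$, is a point the paper leaves implicit, and it is handled correctly.
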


\begin{proof}
By considering $X=Y=I$ in Theorem \ref{th1} we get, (\ref{eqncor1}). Interchanging $A$ and $B$ in (\ref{eqncor1}) we get, (\ref{eqncor2}). 

\end{proof}

\begin{remark}
 Clearly, the inequality (\ref{eqncor1}) is  stronger than the inequality (\ref{Fong}).

\end{remark}

As an application of the inequality (\ref{eqncor1}) we prove the following result.

\begin{cor}\label{corcor1}
Let $A,B\in \mathcal{B}(\mathcal{H})$ and let $B\neq 0$. 
If $w(AB\pm BA) = 2\sqrt{2} \|B\|w(A)$, then $0\in \overline{W(\Re (A))}\cap \overline{W(\Im (A))}$.
\end{cor}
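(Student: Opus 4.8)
The plan is to prove the contrapositive-free direct implication by feeding the equality hypothesis through inequality (\ref{eqncor1}) from Corollary \ref{corth1} and extracting the forced vanishing of a Crawford number. First I would observe that inequality (\ref{eqncor1}) gives
\[
w(AB\pm BA)\leq 2\sqrt{2}\,\|B\|\sqrt{w^2(A)-\frac{c^2(\Re (A))+c^2(\Im (A))}{2}}.
\]
Under the hypothesis $w(AB\pm BA)=2\sqrt{2}\,\|B\|\,w(A)$ and the assumption $B\neq 0$, I would divide both sides by $2\sqrt{2}\,\|B\|$ (legitimate since $\|B\|\neq 0$) to obtain
\[
w(A)\leq \sqrt{w^2(A)-\frac{c^2(\Re (A))+c^2(\Im (A))}{2}}.
\]
Squaring both sides (both are nonnegative) yields $w^2(A)\leq w^2(A)-\tfrac{1}{2}\bigl(c^2(\Re (A))+c^2(\Im (A))\bigr)$, which forces $c^2(\Re (A))+c^2(\Im (A))\leq 0$. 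Since each term is a square of a real number and hence nonnegative, I conclude $c(\Re (A))=0$ and $c(\Im (A))=0$.

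The remaining step is to translate $c(\Re (A))=0$ and $c(\Im (A))=0$ into the membership statement $0\in\overline{W(\Re (A))}\cap\overline{W(\Im (A))}$. Here I would use the definition of the Crawford number, $c(T)=\inf_{x\in S_{\mathcal{H}}}|\langle Tx,x\rangle|$, together with the fact that for a self-adjoint operator $T$ (both $\Re(A)$ and $\Im(A)$ are self-adjoint), the quantities $\langle Tx,x\rangle$ are real, so $W(T)\subseteq\mathbb{R}$. The condition $c(T)=0$ says precisely that the distance from $0$ to $W(T)$ (equivalently to its closure) is zero; explicitly, there is a sequence $x_n\in S_{\mathcal{H}}$ with $|\langle Tx_n,x_n\rangle|\to 0$, and since $\langle Tx_n,x_n\rangle\in\overline{W(T)}$ for each $n$ with $\overline{W(T)}$ closed, the limit $0$ lies in $\overline{W(T)}$. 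Applying this to $T=\Re (A)$ and $T=\Im (A)$ separately gives $0\in\overline{W(\Re (A))}$ and $0\in\overline{W(\Im (A))}$, hence $0$ lies in the intersection.

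I do not expect a serious obstacle here: the argument is a short chain of an appeal to the already-established Corollary \ref{corth1}, a squaring-and-cancellation step, and a standard unpacking of the Crawford number's definition. The only point requiring a little care is the division by $2\sqrt{2}\,\|B\|$, which is exactly where the hypothesis $B\neq 0$ is used, and the observation that $c(T)=0$ is genuinely equivalent to $0\in\overline{W(T)}$ rather than to $0\in W(T)$ (the closure is essential in infinite dimensions, where $W(T)$ need not be closed). Keeping the conclusion phrased with the closure $\overline{W(\cdot)}$ sidesteps that subtlety cleanly.
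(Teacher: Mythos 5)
Your proposal is correct and follows essentially the same route as the paper: apply inequality (\ref{eqncor1}) under the equality hypothesis, cancel $2\sqrt{2}\|B\|$, deduce $c(\Re(A))=c(\Im(A))=0$, and unpack the Crawford number's infimum definition via sequences to place $0$ in both closures. Your version is if anything slightly more explicit than the paper's (which asserts the resulting equality directly rather than squaring an inequality), but the substance is identical.
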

\begin{proof}
Let $w(AB\pm BA) = 2\sqrt{2} \|B\|w(A)$. Then it follows from (\ref{eqncor1}) that  \[ w(A)=\sqrt{ w^2(A)-\frac{c^2(\Re (A))+c^2(\Im (A))}{2} }.\] Hence, $c^2(\Re (A))+c^2(\Im (A))=0,$ i.e., $c(\Re(A))=c(\Im (A))=0$. Therefore, there exist norm one sequences $\{x_n\}$ and $\{y_n\}$ in $\mathcal{H}$  such that $|\langle \Re(A)x_n,x_n\rangle| \to 0$ and $|\langle \Im(A)y_n,y_n\rangle| \to 0$ as $n\to \infty.$ So, $0\in \overline{W(\Re (A))}\cap \overline{W(\Im (A))}$.
\end{proof}
For our next result we need the following three lemmas, the first two of which can be found in \cite{OK} and \cite{HD}, respectively.
\begin{lemma} $($\cite[Remark 2.2]{OK}$)$\label{lem4}
Let $A,B,X,Y\in \mathcal{B}(\mathcal{H}).$ Then 
\[w^2(AX\pm BY)\leq \|AA^*+Y^*Y\| ~~\|X^*X+BB^*\|.\]
\end{lemma}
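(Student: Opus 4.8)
The plan is to fix a unit vector $x\in S_{\mathcal{H}}$, estimate $|\langle (AX\pm BY)x,x\rangle|$ pointwise by a product of two quantities that assemble into $\langle(AA^*+Y^*Y)x,x\rangle$ and $\langle(X^*X+BB^*)x,x\rangle$, and then take the supremum over $S_{\mathcal{H}}$. First I would rewrite each summand using the adjoint: $\langle AXx,x\rangle=\langle Xx,A^*x\rangle$ and $\langle BYx,x\rangle=\langle Yx,B^*x\rangle$, so that by the triangle inequality
\[
|\langle (AX\pm BY)x,x\rangle|\leq |\langle Xx,A^*x\rangle|+|\langle Yx,B^*x\rangle|.
\]

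Next I would apply the Cauchy--Schwarz inequality to each inner product separately, obtaining the bound $\|A^*x\|\,\|Xx\|+\|B^*x\|\,\|Yx\|$. The key manoeuvre is then a second, discrete Cauchy--Schwarz applied to this sum of two products, but with the factors paired \emph{across} the two terms. Specifically, treating $(\|A^*x\|,\|Yx\|)$ and $(\|Xx\|,\|B^*x\|)$ as the two vectors, I would write
\[
\|A^*x\|\,\|Xx\|+\|Yx\|\,\|B^*x\|\leq \sqrt{\|A^*x\|^2+\|Yx\|^2}\,\,\sqrt{\|Xx\|^2+\|B^*x\|^2}.
\]
The point of this particular grouping is that the two radicands now become exactly the quantities appearing in the statement.

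To finish, I would translate each radicand back into an inner product of a positive operator: $\|A^*x\|^2+\|Yx\|^2=\langle(AA^*+Y^*Y)x,x\rangle\leq\|AA^*+Y^*Y\|$ and likewise $\|Xx\|^2+\|B^*x\|^2=\langle(X^*X+BB^*)x,x\rangle\leq\|X^*X+BB^*\|$, using $\|x\|=1$. Combining these and taking the supremum over $x\in S_{\mathcal{H}}$ yields $w(AX\pm BY)\leq\sqrt{\|AA^*+Y^*Y\|}\,\sqrt{\|X^*X+BB^*\|}$, and squaring gives the claim. I expect the only genuinely delicate point to be the cross-pairing in the discrete Cauchy--Schwarz step: a naive pairing of $\|A^*x\|$ with $\|B^*x\|$ would produce $\|AA^*+BB^*\|$ and $\|X^*X+Y^*Y\|$ instead, so the correct matching of $A^*$ with $Y$ and $B^*$ with $X$ is what makes the bound come out as stated. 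The rest of the argument is a routine chain of standard inequalities.
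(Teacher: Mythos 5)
Your proof is correct: the adjoint rewriting, the two applications of Cauchy--Schwarz with the cross-pairing of $\|A^*x\|$ with $\|Yx\|$ and $\|Xx\|$ with $\|B^*x\|$, and the passage from $\langle (AA^*+Y^*Y)x,x\rangle$ and $\langle (X^*X+BB^*)x,x\rangle$ to the respective operator norms via positivity and $\|x\|=1$ are all valid, and taking the supremum over $S_{\mathcal{H}}$ gives exactly the stated bound. The paper itself offers no proof of this lemma --- it is imported from Abu-Omar and Kittaneh \cite[Remark 2.2]{OK} --- and your argument is the standard one, essentially reproducing the proof in that reference, including the delicate cross-pairing you correctly identify as the crux.
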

\begin{lemma} $($\cite[Th. 1.1]{HD}$)$\label{lem2}
Let $A,B,X,Y\in \mathcal{B}(\mathcal{H}).$ Then 
\[\left \| \left(\begin{array}{cc}
A & X\\
Y & B
\end{array}\right) \right \| \leq \left\| \left(\begin{array}{cc}
\|A\| & \|X\|\\
\|Y\| & \|B\|
\end{array}\right) \right\|.\]
\end{lemma}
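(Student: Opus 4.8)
The plan is to realize $T=\left(\begin{smallmatrix} A & X \\ Y & B\end{smallmatrix}\right)$ as a bounded operator on the orthogonal direct sum $\mathcal{H}\oplus\mathcal{H}$ and to estimate $\|T\|$ by testing it against an arbitrary vector, then to compare the resulting scalar bound with the action of the nonnegative scalar matrix $M=\left(\begin{smallmatrix} \|A\| & \|X\| \\ \|Y\| & \|B\|\end{smallmatrix}\right)$ on $\mathbb{R}^2$. This reduces an operator-matrix norm question to an elementary two-dimensional one.

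First I would fix $(u,v)\in\mathcal{H}\oplus\mathcal{H}$ and compute $T(u,v)=(Au+Xv,\,Yu+Bv)$, so that $\|T(u,v)\|^2=\|Au+Xv\|^2+\|Yu+Bv\|^2$. Applying the triangle inequality together with submultiplicativity of the operator norm in each coordinate gives $\|Au+Xv\|\leq \|A\|\,\|u\|+\|X\|\,\|v\|$ and $\|Yu+Bv\|\leq \|Y\|\,\|u\|+\|B\|\,\|v\|$. Because every term here is nonnegative, squaring and adding preserves the inequalities, yielding an upper bound for $\|T(u,v)\|^2$ purely in terms of the four numbers $\|A\|,\|X\|,\|Y\|,\|B\|$ and the two numbers $\|u\|,\|v\|$.

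The key observation is that this upper bound is precisely $\|M\xi\|^2$, where $\xi=(\|u\|,\|v\|)\in\mathbb{R}^2$, since the two coordinates of $M\xi$ are exactly $\|A\|\,\|u\|+\|X\|\,\|v\|$ and $\|Y\|\,\|u\|+\|B\|\,\|v\|$. Hence $\|T(u,v)\|^2\leq \|M\xi\|^2\leq \|M\|^2\,\|\xi\|^2$, and since $\|\xi\|^2=\|u\|^2+\|v\|^2=\|(u,v)\|^2$, taking the supremum over unit vectors $(u,v)$ in $\mathcal{H}\oplus\mathcal{H}$ delivers $\|T\|\leq \|M\|$, which is the assertion of the lemma.

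I do not expect any genuine obstacle in this argument; the only points that deserve care are bookkeeping rather than substance. One must use that the entries of $M$ are actual norms, hence nonnegative, so that the two coordinatewise scalar estimates genuinely assemble into the single vector inequality $\|T(u,v)\|\leq \|M\xi\|$ and not into something weaker. One should also note that the operator norm (largest singular value) of the real matrix $M$ is the same whether $M$ is viewed as acting on $\mathbb{R}^2$ or on $\mathbb{C}^2$, so the right-hand side of the claimed inequality is unambiguous.
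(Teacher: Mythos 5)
Your argument is correct and complete. Each step holds as stated: the coordinatewise bounds $\|Au+Xv\|\leq \|A\|\,\|u\|+\|X\|\,\|v\|$ and $\|Yu+Bv\|\leq \|Y\|\,\|u\|+\|B\|\,\|v\|$ follow from the triangle inequality and submultiplicativity; since both sides are nonnegative, squaring and adding yields $\|T(u,v)\|^2\leq \|M\xi\|^2$ with $\xi=(\|u\|,\|v\|)$, and then $\|M\xi\|\leq \|M\|\,\|\xi\|=\|M\|\,\|(u,v)\|$ finishes the estimate; your closing remark that the largest singular value of a real $2\times 2$ matrix is the same whether it acts on $\mathbb{R}^2$ or $\mathbb{C}^2$ removes the only possible ambiguity in the right-hand side. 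Be aware, though, that there is no proof in the paper to compare against: the authors import this lemma verbatim from Hou and Du \cite{HD} and use it as a black box in the proof of Lemma \ref{lem3}. What you have written is the standard elementary argument, and it is self-contained where the paper is not; moreover, it generalizes without change to $n\times n$ operator matrices, since for $T=(A_{ij})$ one bounds the $i$th coordinate of $Tx$ by $\sum_j \|A_{ij}\|\,\|x_j\|$ and compares with the matrix of norms acting on the vector $(\|x_1\|,\ldots,\|x_n\|)$, which is all the paper actually needs from the citation.
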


The next lemma is as follows.

\begin{lemma}\label{lem3}
Let $A,B\in \mathcal{B}(\mathcal{H})$. Then
$\|AA^*+B^*B\|\leq \mu(A,B), $
where 
\[ \mu(A,B) = \frac{1}{2}\left[\|A\|^2+\|B\|^2+\sqrt{\left(\|A\|^2-\|B\|^2\right)^2+4\|BA\|^2} \right].\]

\end{lemma}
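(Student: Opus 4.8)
The plan is to realize the positive operator $AA^*+B^*B$ as one half of the pair $T^*T,\,TT^*$ associated to a suitable column operator, and then to invoke the off-diagonal block inequality of Lemma \ref{lem2}. First I would introduce the operator $T=\begin{pmatrix} A^*\\ B\end{pmatrix}\colon \mathcal{H}\to\mathcal{H}\oplus\mathcal{H}$, so that for every $x\in\mathcal{H}$ one has $\|Tx\|^2=\|A^*x\|^2+\|Bx\|^2=\langle (AA^*+B^*B)x,x\rangle$. This identifies $T^*T=AA^*+B^*B$, and hence $\|AA^*+B^*B\|=\|T^*T\|=\|TT^*\|$, transferring the problem to the norm of an operator acting on $\mathcal{H}\oplus\mathcal{H}$.

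Next I would compute $TT^*$ explicitly. Since $T^*=(A\ \ B^*)$ acts by $(y,z)\mapsto Ay+B^*z$, a direct calculation gives
\[
TT^*=\begin{pmatrix} A^*A & A^*B^* \\ BA & BB^*\end{pmatrix}.
\]
Applying Lemma \ref{lem2} to this $2\times 2$ operator matrix yields
\[
\|TT^*\|\leq \left\| \begin{pmatrix} \|A^*A\| & \|A^*B^*\| \\ \|BA\| & \|BB^*\|\end{pmatrix}\right\|.
\]
Here I would use $\|A^*A\|=\|A\|^2$, $\|BB^*\|=\|B\|^2$, and $\|A^*B^*\|=\|(BA)^*\|=\|BA\|$, reducing the right-hand side to the norm of the real symmetric scalar matrix $\begin{pmatrix} \|A\|^2 & \|BA\| \\ \|BA\| & \|B\|^2\end{pmatrix}$.

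Finally, the spectral norm of a $2\times2$ real symmetric matrix equals its largest eigenvalue, so the last quantity is $\tfrac12\big(\|A\|^2+\|B\|^2+\sqrt{(\|A\|^2-\|B\|^2)^2+4\|BA\|^2}\,\big)=\mu(A,B)$, which closes the chain of inequalities. The step I expect to require the most care is the bookkeeping in forming $TT^*$ and confirming that the off-diagonal entry collapses to $\|BA\|$ through the relation $A^*B^*=(BA)^*$ together with $\|C^*\|=\|C\|$; once the block matrix is set up, Lemma \ref{lem2} carries the main weight and the eigenvalue evaluation is routine.
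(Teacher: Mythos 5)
Your proof is correct, and it shares the paper's overall skeleton --- turn $AA^*+B^*B$ into a ``flipped'' product realized as a $2\times 2$ operator matrix on $\mathcal{H}\oplus\mathcal{H}$, apply Lemma \ref{lem2}, and compute the top eigenvalue of the resulting symmetric scalar matrix --- but your implementation of the flip is genuinely different, and cleaner. The paper writes $\|AA^*+B^*B\|$ as the spectral radius of $\left(\begin{smallmatrix} AA^*+B^*B & 0\\ 0 & 0 \end{smallmatrix}\right)$, factors this using the moduli $|A^*|$ and $|B|$, and invokes $r(XY)=r(YX)$; the reversed product is $\left(\begin{smallmatrix} |A^*|^2 & |A^*||B|\\ |B||A^*| & |B|^2 \end{smallmatrix}\right)$, so after Lemma \ref{lem2} the paper must identify $\left\||A^*||B|\right\|=\left\||B||A^*|\right\|=\|BA\|$, a true but non-obvious identity that the paper uses without comment. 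Your column operator $T=\left(\begin{smallmatrix} A^*\\ B \end{smallmatrix}\right)$ performs the flip through the $C^*$-identity $\|T^*T\|=\|TT^*\|$, keeping the original operators rather than their moduli as factors; the off-diagonal entry of $TT^*$ is then literally $A^*B^*=(BA)^*$, so its norm is $\|BA\|$ for free. You lose nothing and you bypass both the spectral-radius detour and the modulus identity. One small point of rigor worth adding: the norm of a real symmetric $2\times 2$ matrix is the largest \emph{absolute value} of an eigenvalue; here that is the largest eigenvalue itself because the trace $\|A\|^2+\|B\|^2$ is nonnegative, which forces $\lambda_+\geq |\lambda_-|$.
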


\begin{proof}
 $AA^*+B^*B$ being a self-adjoint operator, we have
\begin{eqnarray*}
\|AA^*+B^*B\|&=&r(AA^*+B^*B)\\
&=& r\left(\begin{array}{cc}
AA^*+B^*B & 0\\
0 & 0
\end{array}\right)\\
&=&r\left(\left(\begin{array}{cc}
|A^*| & |B|\\
0 & 0
\end{array}\right)\left(\begin{array}{cc}
|A^*| & 0\\
|B| & 0
\end{array}\right)\right)\\
&=& r\left(\left(\begin{array}{cc}
|A^*| & 0\\
|B| & 0
\end{array}\right)\left(\begin{array}{cc}
|A^*| & |B|\\
0 & 0
\end{array}\right)\right), ~~r(XY)=r(YX)\\
&=& r\left(\begin{array}{cc}
|A^*|^2 & |A^*||B|\\
|B||A^*| & |B|^2
\end{array}\right)\\
&=& \left\|\left(\begin{array}{cc}
|A^*|^2 & |A^*||B|\\
|B||A^*| & |B|^2
\end{array}\right)\right\|\\
&\leq& \left\| \left(\begin{array}{cc}
\|A\|^2 & \||A^*||B|\|\\
\||B||A^*| \| & \|B\|^2
\end{array}\right)\right\|,~~\textit{by Lemma \ref{lem2} } \\
&=& \left\| \left(\begin{array}{cc}
\|A\|^2 & \|BA\| \\
\|BA\| & \|B\|^2
\end{array}\right) \right\| \\
&=& \frac{1}{2}\left[\|A\|^2+\|B\|^2+\sqrt{\left(\|A\|^2-\|B\|^2\right)^2+4\|BA\|^2} \right].
\end{eqnarray*}
Hence, \[\|AA^*+B^*B\|\leq \mu(A,B).\] 

\end{proof}

\begin{remark}
Notice that $\mu(A,B)\leq \max\{ \|A\|^2, \|B\|^2\}+\|BA\|$. In particular, if $A=B$ then $\mu(A,A)= \|A\|^2+\|A^2\|$. Hence, we have 
$\|AA^*+A^*A\|\leq \|A\|^2+\|A^2\|$.

\end{remark}

Now we are in a position to prove the following result.

\begin{theorem}\label{th2}
Let $A,B,X,Y\in \mathcal{B}(\mathcal{H}).$ Then 
\[w(AX \pm BY)\leq \sqrt{\mu(A,Y)~~\mu(B,X)}.\]
\end{theorem}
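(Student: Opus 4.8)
The plan is to chain together Lemma \ref{lem4} and Lemma \ref{lem3}, the latter applied twice. The target inequality has the numerical radius of $AX \pm BY$ on the left and a product of two $\mu$-quantities under a square root on the right, and both ingredients are already in place: Lemma \ref{lem4} bounds $w^2(AX \pm BY)$ by a product of two operator norms of sums of the form ``positive $+$ positive,'' while Lemma \ref{lem3} is precisely the device that bounds such a norm by a $\mu$-quantity. So the whole argument is a matter of correctly matching the operators to the slots of Lemma \ref{lem3}.

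First I would invoke Lemma \ref{lem4} to obtain
\[
w^2(AX \pm BY) \leq \|AA^* + Y^*Y\|\,\,\|X^*X + BB^*\|.
\]
Now I would estimate each factor separately using Lemma \ref{lem3}. For the first factor, $\|AA^* + Y^*Y\|$ is exactly of the form $\|AA^* + B^*B\|$ with the role of the second operator played by $Y$, so Lemma \ref{lem3} yields $\|AA^* + Y^*Y\| \leq \mu(A,Y)$. For the second factor, I would first rewrite $\|X^*X + BB^*\| = \|BB^* + X^*X\|$ to put it in the shape $\|(\,\cdot\,)(\,\cdot\,)^* + (\,\cdot\,)^*(\,\cdot\,)\|$ demanded by Lemma \ref{lem3}, now with $B$ in the first slot and $X$ in the second slot; this gives $\|BB^* + X^*X\| \leq \mu(B,X)$.

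Combining the three displays gives
\[
w^2(AX \pm BY) \leq \mu(A,Y)\,\mu(B,X),
\]
and taking square roots delivers the claimed bound $w(AX \pm BY) \leq \sqrt{\mu(A,Y)\,\mu(B,X)}$.

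There is no genuine analytic obstacle here; the proof is a two-line composition of existing lemmas. The only point requiring care is the bookkeeping of the argument order in $\mu$: since $\mu(A,B)$ is \emph{not} symmetric in its arguments (the cross term is $\|BA\|$ rather than $\|AB\|$), I must make sure to match $AA^* + Y^*Y$ with $\mu(A,Y)$ and $BB^* + X^*X$ with $\mu(B,X)$, and not inadvertently swap the slots. Verifying this is exactly verifying that the second argument in each application of Lemma \ref{lem3} is the operator appearing in the ``$(\cdot)^*(\cdot)$'' summand, which is what the rewriting step above is designed to guarantee.
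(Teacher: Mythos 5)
Your proposal is correct and is essentially the paper's own proof: the paper's argument for Theorem \ref{th2} is exactly the composition of Lemma \ref{lem4} with two applications of Lemma \ref{lem3} (matching $\|AA^*+Y^*Y\|$ to $\mu(A,Y)$ and $\|BB^*+X^*X\|$ to $\mu(B,X)$), which you have spelled out in full. Your additional care about the asymmetry of $\mu$ (the cross term being $\|BA\|$, not $\|AB\|$) is a correct and worthwhile observation, as the slot order does matter in general.
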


\begin{proof}
The proof follows from Lemma \ref{lem4} and Lemma \ref{lem3}.

\end{proof}

An application of Theorem \ref{th2} we get the following corollary.

\begin{cor}\label{cor3}
Let $A,B\in \mathcal{B}(\mathcal{H})$. Then 
\[w(AB\pm BA)\leq \sqrt{\left(\|A\|^2+\|A^2\|\right) \left(\|B\|^2+\|B^2\|\right)}.\]

 \end{cor}

\begin{remark}
Let $A,B\in \mathcal{B}(\mathcal{H})$ with $A^2=B^2=0.$ Then it follows from Corollary \ref{cor3} that $w(AB\pm BA)\leq \|A\| \|B\| < 2\sqrt{2}\|B\|w(A)=\sqrt{2}\|A\|\|B\|$.

\end{remark}

\bibliographystyle{amsplain}

\end{document}